\newcommand{\bp}{\it Proof.}
\newcommand{\ep}[1]{\begin{flushright}\fbox{\hspace{.1em}}\:[#1]\end{flushright}}
\numberwithin{equation}{section}
\newcommand{\diff}{\operatorname{Diff}}
\newcommand{\PH}{\operatorname{PH}}
\newcommand{\eps}{\varepsilon}
\newcommand{\supp}{\operatorname{supp}}
\newcommand{\Fr}{\operatorname{\mathcal{F}\hspace{-.3em}r}}
\renewcommand{\div}{\operatorname{div}}
\DeclareMathOperator{\sgn}{sgn}
\newcommand{\Per}{{\rm Per}}
\def \cD {{\mathcal D}}
\def \cDA {{\rm{ DA}}}
\def \cP {{\mathcal P}}
\def \cR {{\mathcal R}}
\def \cW {{\mathcal W}}
\theoremstyle{plain}
\newtheorem{maintheorem}{Theorem}
\newtheorem{maintheo}{Main Theorem}
\newcommand{\R}{\mathbb{R}}
\newcommand{\N}{\mathbb{N}}
\newcommand{\T}{\mathbb{T}}
\newcommand{\F}{\mathcal{F}}
\newtheorem{theorem}{Theorem}[section]
\newtheorem{conjecture}[theorem]{Conjecture}
\newtheorem{cor}[theorem]{Corollary}
\newtheorem{proposition}[theorem]{Proposition}
\newtheorem{lemma}[theorem]{Lemma}
\newtheorem{definition}[theorem]{Definition}
\newtheorem{question}[theorem]{Question}
\newtheorem{claim}[theorem]{Claim}
\newtheorem{remark}[theorem]{{Remark}}
\newtheorem{notation}[theorem]{Notation}
\newtheorem{exercise}[theorem]{Exercise}
\def\bp{\noindent{\it Proof. }}
\def\bp{\noindent{\it Proof. }}
\begin{document}

\thanks{FPM was partially supported by NNSFC 12071202, JRH  was partially supported by NSFC 12161141002 and NSFC 12250710130, RM  was partially supported by NSFC 12250710130, and RU was partially supported by  NNSFC 12071202, and NNSFC
12161141002. }

\author[F. Micena]{Fernando Micena}
\address{F. Micena, Instituto de Matem\'{a}tica e Computa\c{c}\~{a}o,
  IMC-UNIFEI, Itajub\'{a}-MG, Brazil.}
\email{fpmicena82@unifei.edu.br}

\author{Ryo Moore}
\address{R. Moore,  
Department of Mathematics, Southern University of Science and Technology of China,
No. 1088, Xueyuan Rd., Xili, Nanshan District, Shenzhen,
Guangdong 518055, China.}
\email{ryomoore@sustech.edu.cn}

\author[J. Rodriguez Hertz]{Jana Rodriguez Hertz}

\address{J. Rodriguez Hertz,
Department of Mathematics of SUSTech and Shenzhen International Center for Mathematics, Southern University of Science and Technology of China,
No. 1088, Xueyuan Rd., Xili, Nanshan District, Shenzhen,
Guangdong 518055, China.}
\email{rhertz@sustech.edu.cn}

\author[R. Ures]{Raul Ures}
\address{R. Ures,
Department of Mathematics SUSTech and Shenzhen International Center for Mathematics, Southern University of Science and Technology of China,
No. 1088, Xueyuan Rd., Xili, Nanshan District, Shenzhen,
Guangdong 518055, China.}
\email{ures@sustech.edu.cn}

\subjclass[2020]{37A35; 37C15; 37C40; 37D25; 37D30}

\renewcommand{\subjclassname}{\textup{2020} Mathematics Subject Classification}

\date{\today}

\setcounter{tocdepth}{2}

\title{ Measures of maximal entropy that are SRB }
 \begin{abstract} 
A smooth conservative DA-diffeomorphism is smoothly conjugated to its Anosov linear part if and only if all Lyapunov exponents coincide almost everywhere with those of its linear part. \par\vspace{.3em}
A more general result for entropy maximizing measures of $C^{1+\alpha}$ partially hyperbolic diffeomorphisms isotopic to Anosov (DA-diffeomorphisms) on ${\mathbb T}^{3}$ is that they are SRB measures if and only if the sum of its positive Lyapunov exponents coincides with that of the linear Anosov map $A$ on all periodic orbits of the support of the measure. In that case, the measure is also the unique physical measure.\par
\vspace{.3em}

This rigidity result is not as strong as in the A. Katok rigidity conjecture. Examples are provided.
\end{abstract}
\maketitle

\section{Introduction}\label{introduction}

The {\bf metric entropy} of the volume probability measure $m$ with respect to a volume preserving diffeomorphism $f$ on a compact manifold is given by the {\bf Pesin's formula}:
\begin{equation}\label{pesin.formula}
h_{m}(f)=\int\sum_{\lambda_{i}(f,x)>0}\lambda_{i}(f,x)dm
\end{equation}
where $\lambda_{i}(f)$ are the positive Lyapunov exponents of $f$ counted with their multiplicity. More information in Section \ref{preliminaries}.\par
The complexity of $f$ can also be measured by its {\bf topological entropy}, $h_{top}(f)$. 
The classical Variational Principle states that
\begin{equation}\label{variational.principle}
h_{top} (f)=\sup\{h_{\mu}(f):= \mu\:\text{$f$-invariant probability measure }\}
\end{equation}
under certain circumstances.\par
Any measure attaining the maximum in equation \eqref{variational.principle} is a {\bf measure of maximal entropy} ({\bf m.m.e}). \par
How often does the complexity of a system with respect to a physically meaningful measure coincide with its intrinsic complexity?  
\begin{question}\label{main.question}
How often is the measure of maximal entropy an SRB measure? 
\end{question}

This problem was first addressed by A. Katok:
\begin{conjecture}[A. Katok, 1982 \cite{katok82}] \label{conj.katok}If $\phi_{t}$ is a geodesic flow of a manifold $M$ with negative curvature, then the Liouville measure is a measure of maximal entropy if and only if $M$ is locally symmetric. 
\end{conjecture}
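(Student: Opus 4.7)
The plan is to combine the Pesin formula \eqref{pesin.formula} for the Liouville measure $\mu_L$ with a rigidity argument for the unstable Jacobian cocycle. Since $M$ has negative curvature, the geodesic flow $\phi_t$ on the unit tangent bundle $SM$ is Anosov, and both $\mu_L$ and the topological entropy $h_{top}(\phi_t)$ are well defined. The reverse implication is classical: when $M$ is locally symmetric, the Anosov splitting is smooth and the function
\[
\chi^+(v) \;=\; \log\left|\det\left(D\phi_1\big|_{E^u(v)}\right)\right|
\]
is constant, so the Margulis measure of maximal entropy agrees, up to normalization, with $\mu_L$.

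For the forward implication, first invoke uniqueness of the measure of maximal entropy for topologically mixing Anosov flows to identify $\mu_L$ with the Bowen--Margulis measure $\mu_{BM}$. Pesin's formula combined with the variational principle \eqref{variational.principle} then yields
\[
h_{top}(\phi_1) \;=\; h_{\mu_L}(\phi_1) \;=\; \int_{SM}\chi^+\,d\mu_L.
\]
The next step is to promote this integral identity to a pointwise (cohomological) identity. Here I would use Livshitz's periodic-orbit theorem for Anosov flows: if the Birkhoff averages of $\chi^+$ along every periodic orbit coincide with the constant $h_{top}(\phi_t)$, then $\chi^+$ is cohomologous, via a H\"older coboundary along the flow, to that constant. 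The missing input, namely that $\mu_{BM}$ sees every periodic orbit with the ``correct'' expansion rate, is precisely Bowen's equidistribution of periodic orbits for the m.m.e.\ of an Anosov flow.

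The third step is to upgrade this cohomological rigidity to geometric rigidity, concluding that $M$ is locally symmetric. A natural route is to bootstrap regularity of the Anosov splitting via the de~la~Llave--Marco--Moriy\'on theory for cohomological equations over Anosov systems, then exploit Jacobi-field identities to constrain the sectional curvatures, and finally apply a symmetric-space rigidity result in the spirit of Foulon--Labourie or Besson--Courtois--Gallot.

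The main obstacle lies in this final geometric step. Cohomological constancy of the \emph{scalar} cocycle $\chi^+$ is substantially weaker than a pointwise identification of the full unstable derivative with its symmetric-space model, and no general mechanism is known to bridge that gap in variable negative curvature. Katok handled dimension two in \cite{katok82} by exploiting the one-dimensionality of horocycles; in higher dimensions the conjecture remains open outside special frameworks (asymptotically harmonic metrics, suitable curvature pinching, rank-rigidity settings). A full resolution would almost certainly require a new infusion of differential-geometric input going beyond the Pesin-theoretic skeleton outlined here, which is why this conjecture has remained open for four decades and why the paper instead develops the analogous rigidity in the much more tractable setting of DA-diffeomorphisms on $\mathbb{T}^3$.
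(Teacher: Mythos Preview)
The paper does not contain a proof of this statement: it is recorded there as an \emph{open conjecture} of Katok, with a brief survey of partial results (Katok in dimension two, Hamenst\"adt, Besson--Courtois--Gallot, Foulon, de Simoi--Leguil--Vinhage--Yang). There is therefore nothing to compare your proposal against.

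Your write-up is not a proof either, and you are candid about this: the final paragraph explicitly concedes that the conjecture remains open in higher dimensions and that the crucial third step---passing from cohomological triviality of the scalar unstable Jacobian to local symmetry of the metric---has no known general mechanism. So what you have submitted is a heuristic outline of the standard Pesin-theoretic approach together with an accurate diagnosis of where it stalls, not a proof attempt. If the intent was to prove the conjecture, the genuine gap is exactly the one you name: constancy of $\chi^{+}$ up to a coboundary does not by itself force the Anosov splitting, let alone the curvature tensor, to match a symmetric-space model. If instead the intent was to explain why the paper pivots to DA-diffeomorphisms on $\mathbb{T}^{3}$, then your explanation is reasonable, but it should be labelled as commentary rather than as a proof proposal.
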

Many people have worked on this conjecture, notably A. Katok himself, who proved it true in the two-dimensional case, Hamenst\"adt \cite{UH95}, Besson-Courtois-Gallot \cite{BCG95}, Foulon \cite{F01}, de Simoi - Leguil - Vinhage - Yang \cite{DLVY20}, and others.  (The authors thank F. Rodriguez Hertz for bringing this conjecture to their attention.) \par\vspace{.3em}

{\bf $\PH(\T^{3})$} is the set of partially hyperbolic diffeomorphisms in $\T^{3}$. For a hyperbolic linear automorphism $A$ of $\T^{3}$, the set 
\begin{equation}\label{DAdef}
{\mathbf \cDA(A)}=\{f\in\PH(\T^{3}): \exists\alpha:[0,1]\to \diff(\T^{3})\text{ such that }\alpha(0)=A,\, \alpha(1)=f\} 
\end{equation}
is the set {\bf DA-diffeomorphisms} of $A$ on $\T^{3}$. A {\bf DA-diffeomorphism} is an $f\in\cDA(A)$ for some $A$. 
\begin{notation}
    $$\cDA^{1+\alpha}(A)=\cDA(A)\cap \diff^{1+\alpha}(\T^3)\qquad{and}\qquad \cDA^{1+\alpha}=\cDA\cap \diff^{1+\alpha}(\T^3).$$
\end{notation}
$A$ is the linear part of all $f\in\cDA(A)$ and has real spectrum (see Section \ref{preliminaries}). 
$\lambda^{s}_{A}, \lambda_{A}^{c}$, and $\lambda^{u}_{A}$ are the {\bf stable, center, and unstable Lyapunov exponents of $A$}.\par\vspace{.3em}

\begin{notation}
For $f\in \cDA(A)$ and $p\in\Per_{H}(f)$ (the set of $f${\bf-hyperbolic periodic points} $p$ of $f$),
\begin{equation}\label{lambda+}
 {\mathbf{\lambda^{+}(p)}}=\left\{
\begin{array}{ll}
 \lambda_{f}^{u}(p)&\text{ if }\lambda^{c}_{A}<0\\
 \lambda_{f}^{u}(p)+\lambda_{f}^{c}(p)&\text{ if }\lambda^{c}_{A}>0.
\end{array}
 \right.
\end{equation}
$\lambda_{f}^{u}(p)$ and $\lambda_{f}^{c}(p)$ are the {\bf unstable and center Lyapunov exponents} of $p$ with respect to $f$.\par
\end{notation}
\setcounter{maintheo}{-1}
\begin{maintheo}[Lyapunov exponents rigidity] \label{thmC} \label{corolario.saghin.yang}
If $f:\T^{3}\to\T^{3}$ is a $C^{\infty}$ volume preserving DA-diffeomorphism, then $f$ is smoothly conjugated to its linear part $f_{*}$ if and only if 
\begin{equation}\label{lyapunov.rigidity}
 \lambda^{\sigma}(f)=\lambda^{\sigma}_{f_{*}}, \qquad\text{for }\sigma=s,c,u.
\end{equation}
\end{maintheo}
\setcounter{maintheo}{-1}
\vspace*{1em}
\subsection*{Organization of the paper}

The Main Theorem is proved in the last section. \par
Theorem \ref{thmA} (Section \ref{section.thm.A}) provides a complete answer to Question \ref{main.question} in the particular case of DA-diffeomorphisms of $\T^{3}$. 
This question is relevant in more general contexts: different manifolds, higher dimensions, and more general dynamics. 
However, for general partially hyperbolic diffeomorphisms in three-dimensional manifolds, this issue is open and has not been addressed yet. We consider this paper an analysis of a toy example that can provide hints for future generalizations.  \par\vspace{.3em}
Condition \eqref{condition.thm.A} in Theorem \ref{thmA} is very fragile in the sense that it can be easily broken by a $C^{\infty}$-perturbation, both in the conservative and non-conservative case, see Theorem \ref{thm.examples}. On the other hand, the set of diffeomorphisms $f\in\cDA$ for which condition \eqref{condition.thm.A} fails is a $C^{1}$-open set. \par\vspace{.3em}

 As stated in the Introduction, this article is inspired by Conjecture \ref{conj.katok}, which states that the entropy-maximizing measure of the geodesic flow of a manifold with negative curvature coincides with the Liouville measure if and only if the manifold is locally symmetric (constant curvature in the 2-dimensional manifold case). The conjecture was proved true for surfaces, but it remains open in the general case despite successive advances. \par\vspace{.3em} 
 We aim to obtain a result of rigidity in the spirit of Conjecture \ref{conj.katok}. This is the content of Theorem \ref{thmA}. Theorem \ref{thm.examples}  shows that this rigidity is not as ``rigid'' as expected in A. Katok's setting (in fact, people in the geodesic flows area were more surprised by Theorem \ref{thm.examples} than by Theorem \ref{thmA}). One can have a diffeomorphism for which the entropy-maximizing measure is SRB, or even Lebesgue, but the diffeomorphism is not a linear Anosov diffeomorphism - the analogous of ``constant curvature''-. \par\vspace{.3em}

From Theorems \ref{thmA} and \ref{thm.examples} it follows that, even though there is some degree of rigidity, this rigidity is not enough to guarantee $f$ to be an Anosov diffeomorphism. This panorama may change if we increase our requirements.
\begin{remark}\label{obs.da.ergodicos}
The Lyapunov exponents of $f$ for the volume probability measure are constant since all $f \in \cDA$ are ergodic \cite{ganshi}.  
\end{remark}

\begin{notation}\label{notation.lyap.exp.erg}
 In the case of a general $f$-invariant ergodic probability measure $\mu$, we will call its Lyapunov exponents $\lambda^{s}_{\mu}(f), \lambda^{c}_{\mu}(f)$, and $\lambda^{u}_{\mu}(f)$. 
\end{notation}
\subsection*{Acknowledgments:} The authors thank Pablo Carrasco for his support. The authors also acknowledge anonymous reviewers for their feedback that helped improve this presentation.
\section{Preliminaries}\label{preliminaries}
Let $M$ be a closed Riemannian manifold. A diffeomorphism $f:M\to M$ is {\bf partially hyperbolic} if the tangent bundle of $M$ admits a $Df$-invariant splitting $TM=E^{s}\oplus E^{c}\oplus E^{u}$ (the three subbundles are nontrivial), and a Riemannian metric such that all unit vectors $v^{\sigma}\in E^{\sigma}$, $\sigma=s,c,u$ satisfy 
$$\|Df(x)v^{s}\|<\|Df(x)v^{c}\|<\|Df(x)v^{u}\|,$$
$$\|Df(x)v^{s}\|<1<\|Df(x)v^{u}\|.$$
From now on, we consider $f\in \cDA(A)$, as defined in \eqref{DAdef}. The matrix $A$ has a real spectrum; see \cite[Corollary 5.1.10, Theorem 5.2.1]{P}. \par

There are invariant foliations $\mathcal W^{u}$ and $\mathcal W^{s}$ tangent to the bundles $E^{u}$ and $E^{s}$. The leaves of these foliations are the {\bf unstable leaves } and the {\bf stable leaves}. \par\vspace{.3em}

If $f$ is $C^{1+\alpha}$, the {\bf Pesin unstable manifold} of a point $x$ is the set
\begin{equation}
W^{+}(x)=\left\{y:\limsup_{n\to\infty} \frac{1}{n} \log d(f^{-n}(x),f^{-n}(y))<0\right\}.    
\end{equation}

$W^{+}(x)$ is an immersed manifold for a {\bf total measure set} $\cP$ ($\mu(\cP)=1$ for all $f$-invariant ergodic probability measures $\mu$) \cite{Pesin76}. In our setting,   $W^{u}(x)\subset W^{+}(x)$. If $\lambda^{c}_\mu(f)>0$, then $W^{u}(x)\subsetneq W^{+}(x)$ on $\cP$. This follows from the Pesin Stable Manifold Theorem: $\dim(W^{+}(x))$ equals the number of positive Lyapunov exponents of $f$, counted with their multiplicity \cite{Pesin76}.\vspace{.3em}

\begin{notation}\label{regular.points}
\begin{enumerate}
    \item $\cP$ is the total measure set of points $x$ for which $W^+(x)$ is an immersed manifold. 
    \item $\cW^{+}$ is {\bf the partition of} $\cP$ into the leaves $W^{+}(x)$.
\end{enumerate}    
\end{notation}
\vspace*{0.3em}
\begin{definition}\label{def.measurable.partition}
If $\mu$ is a probability measure, a {\bf $\mu$-measurable partition} $\eta$ is a partition satisfying - up to a set of $\mu$-measure zero - that the quotient space $\T^{3}\slash \eta$ is separated by a countable number of measurable sets (see \cite{Rokhlin1949}).
\end{definition}
\vspace{0.3em}
\begin{definition} For an ergodic invariant probability measure  $\mu$, and a partition $\mathcal F$ of a total measure set $\cP$ so that ${\mathcal F}(x)$ is an immersed manifold for every $x\in \cP$, a $\mu$-measurable partition $\eta$ is {\bf $\mu$-subordinate} to $\mathcal F$ if for all $x\in \cP$, 
\begin{enumerate}
 \item $\eta(x)\subset\mathcal F(x)$ has a uniformly bounded from above diameter inside $\mathcal F(x)$, 
 \item $\eta(x)$ contains an open neighborhood of $x$ inside $\mathcal F(x)$.
\end{enumerate}

 \end{definition}
\vspace*{2em}
From this point on, we will talk about ``subordinate partitions'' without explicitly mentioning any measure. All invariant probability measures will be considered ergodic, unless explicitly stated otherwise. \par\vspace{.3em}

$\eta$ is any $\mu$-measurable partition subordinate to $\mathcal{W}^+$ (see proof of its existence in \cite{Pesin76}). $\mu^{\eta}_{x}$ and $m_x^{\eta}$ are the $\mu$ and Lebesgue measures in $\eta(x)$ induced by the Riemannian structure of $\T^{3}$ on $W^+(x)$.
We will call them $\mu_{x}^{+}$ and $m_{x}^{+}$ when $\eta$ is clear from the context. 
\begin{definition}[SRB measure] If $f$ is $C^{1+\alpha}$, $\mu$ is an {\bf SRB measure} for $f$ if 
\begin{equation}\label{eq.def.srb}
\mu_x^{+}\quad \text{is equivalent to}\quad m_x^{+}\quad \mu-\text{a.e. }x,
\end{equation}
Equation \eqref{eq.def.srb} means that $\mu^{+}_{x}<< m^{+}_{x}\quad \text{and}\quad m^{+}_{x}<< \mu^{+}_{x}$.\: $\mu<<\nu$ means that $\mu$ is absolutely continuous with respect to $\nu$.
\end{definition}

With the above notations:
\begin{theorem}[Ledrappier-Young \cite{LY1}, Brown \cite{B}]\label{srb}  If $f$ is a $C^{1+\alpha}$ diffeomorphism , $\mu$ is an SRB measure of $f$ if and only if 
\begin{equation}\label{pesin.entropy.formula}
 h_{\mu}(f)  = \sum_{\lambda_i(f) > 0} \lambda_i(f).
\end{equation}
($h_{\mu}(f)$ is the sum of all positive Lyapunov exponents $\lambda_i$ counted with their multiplicity.)
\end{theorem}

Brown showed that this theorem holds in the $C^{1+\alpha}$ case \cite[Theorem 1.4]{B}.\vspace{.3em}

\begin{notation}
    The {\em support}
 of a probability measure $\mu$ is 
 $$\supp(\mu)=\left\{x:\, \mu(B_\eps(x))>0\quad\forall \,\eps>0\right\},$$
 where $B_\eps(x)$ is, as usual, the ball of radius $\eps>0$ centered at $x\in\T^3$.
 \end{notation}
 \begin{proposition}\label{supp.+.saturated} If $\mu$ is an SRB measure, then the set  $\supp(\mu)$ is {\em $W^+$ -saturated}:  $\mu$-almost every $x\in\supp(\mu)$, \[W^+(x)\subset \supp(\mu).\]
 \end{proposition}
 \begin{exercise}
     Find the reference for this. Better yet, try to prove it yourself.
 \end{exercise}
This is our use of Jacobians. 

\begin{definition}\label{jacobian} If $g:M\to N$ is a function, a measurable function $Jg:M\to \R$  is a {\bf Jacobian} of $g$ if 
$$ m_N(g(A))=\int_A Jg \, dm_M\qquad\forall\;A\quad\text{measurable}.$$
If $g$ is a diffeomorphism, then $Jg=\det (Dg)$ almost everywhere. $m_{M}$ and $m_{N}$ are the Lebesgue measures. 
\end{definition}
Pay attention to this notation
\begin{notation}
If $f:\T^{3}\to\T^{3}$ is partially hyperbolic, 
$$Jf^{+}=\left\{
\begin{array}{ll}
Jf^{u}&\text{ if }\lambda^{c}_{A}<0\\
Jf^{cu}&\text{ if }\lambda^{c}_{A}>0
\end{array}
\right..$$
$Jf^{u}$ is the determinant of $Df$ restricted to $E^{u}$, and $Jf^{cu}$ the determinant restricted to $E^{cu}.$
\end{notation}
\section{Preliminary rigidity results} \label{section.thm.A}
\begin{maintheorem}[SRB rigidity]\label{thmA}
If $f\in \cDA^{1+\alpha}(A)$ and $\mu$ is the measure of maximal entropy of $f,$ $\mu$ is SRB if and only if
\begin{equation}\label{condition.thm.A}
h_{top}(f)=h_{top}(A)=\lambda^{+}_{f}(p)\qquad\forall p\in\Per_{H}(f)\cap\supp(\mu)
\end{equation}
In this case, $\mu$ is the only SRB measure for $f$.
\end{maintheorem}
\begin{maintheorem}[Rigidity, ma non tanto]\label{thm.examples}
If $A$ has eigenvalues satisfying $\lambda^s_A < \lambda^c_A<0<\lambda^u_A$, there are $f\in\cDA(A)$ whose entropy maximizing measure $\mu$ is SRB and can have one of these properties:\vspace{.4em}

\begin{enumerate}
 \item $f$ is not Anosov. The measure of maximal entropy $\mu$ can be chosen to be Lebesgue or not. 
\item $f$ is Anosov and the Lyapunov exponents of $\mu$ are different from those of $A$. 
\end{enumerate}
\end{maintheorem}

The sufficiency of condition \eqref{condition.thm.A} in Theorem \ref{thmA} is not very complicated: the measure of maximal entropy $\mu$ is ergodic and hyperbolic, and $\sgn(\lambda^{c}(\mu))=\sgn(\lambda^{c}_{A})$ (Theorem \ref{thm.ures}). By A. Katok's closing lemma \cite{katok80} periodic points approximate the support of $\mu$,
 and there are periodic measures $\mu_{n}$ - supported on the orbits of these periodic points -  converging to $\mu$ in the weak-$*$ topology. Then:
\begin{eqnarray*}
 \lambda^{+}_{f}(\mu)&=&\int\log Jf^{+}(x)d\mu\\
 &=&\lim_{n}\int\log Jf^{+}(x)d\mu_{n}\\
 &=&\lim_{n}\lambda^{+}_{f}(p_{n})=h_{top}(f)=h_{top}(A),
\end{eqnarray*}
where $p_{n}$ belongs to $\supp \mu_{n}$ (see \eqref{lambda+}.) In our case, these periodic points can be chosen inside the support of $\mu$. See Claim \ref{pg.in.Kg}.\par

Since $\mu$ satisfies the Pesin formula, it is an SRB measure (Theorem \ref{srb}).
\begin{definition}\label{saturated.set} A set $L$ is {\bf saturated with respect to a foliation } $\F$ if for every $x\in L$, $\F(x)\subset L$. We also say that $L$ is {\bf $\F$-saturated}.
\end{definition}

\begin{definition}\label{minimal.set}
 A compact set $K$ is {\bf minimal with respect to a foliation } $\F$ if every non-empty compact $\F$-saturated set $L$ contained in $K$ satisfies $L=K$. We also say that $K$ is {\bf $\F$-minimal}. If $\T^3$ is minimal with respect to a foliation $\F$, then $\F$ is a {\bf minimal foliation}.
\end{definition}
\begin{theorem}\label{thm.ures}\cite{U}
 If $f\in\cDA^{1+\alpha}$, there exists a unique measure of maximal entropy, $\mu$. And
 
\begin{enumerate}
 \item  $\sgn(\lambda^{c}_{f}(\mu))=\sgn(\lambda^{c}_{A})$.
 \item If $\lambda^{c}_{A}<0$, there exists a unique $\cW^{u}$-minimal set $K_{u}=\supp\mu$.
\item If $\lambda^{c}_{A}>0$, there exists a unique $\cW^{s}$-minimal set $K_{s}=\supp\mu$.
\end{enumerate}
The topological entropy of $f$ satisfies $h_{top}(f)=h_{top}(A)$.
\end{theorem}
\subsection{Strategy to prove the necessity in Theorem \ref{thmA}}\label{subsection.strategy}
We will see that the cohomological equation \eqref{cohomological.equation} admits a continuous solution $\phi$ on $\supp\mu$. 
That is, there exists a continuous $\phi$ satisfying:
\begin{equation}\label{cohomological.equation}
\log Jf^{+}(x)-h_{top}(f)=\phi(x)-\phi(f(x)) \qquad{\forall x\in \supp \mu}. 
\end{equation}

For any periodic point $p$ in the support of $\mu$, \: $\mu_{p}$ is the ergodic invariant measure supported on the orbit of $p$. If there is a continuous function $\phi$ satisfying \eqref{cohomological.equation}, we can integrate \eqref{cohomological.equation} with respect to the measure $\mu_{p}$ and get $\lambda_{f}^{+}(p)=h_{top}(A)$. This proves Theorem \ref{thmA}. \par\vspace{.3em}

To obtain a continuous solution of the cohomological equation \eqref{cohomological.equation} we start with a ``formal solution'' of \eqref{cohomological.equation} and then prove that this formal solution is measurable and coincides with a continuous function $\phi$ $\mu$-almost everywhere in $\supp \mu$.\vspace{.3em}

\subsection{Proof of the necessity in Theorem \ref{thmA}}\label{sufficiency.thmA}
We introduce a formal solution of \eqref{cohomological.equation}. \par\vspace{.3em}

Since $f$ is a DA-diffeomorphism, there exists a semi conjugacy $h:\T^{3}\to\T^{3}$ such that 
\begin{equation}\label{semiconjugation}
h\circ f=A\circ h. 
\end{equation}
Assume that $Jh^{+}$ is formally defined. Then, from \eqref{semiconjugation}, we get 
$$J(h\circ f)^{+}=JA^{+}.Jh^{+}.$$ 
By the rules of the Jacobian,  $Jh^{+}(f(x)).Jf^{+}(x)=JA^{+}.Jh^{+}(x)$. Taking $\log$, 
 $$\log Jf^{+}(x)-h_{top}(A)=\log Jh^{+}(x)-\log Jh^{+}(f(x)).$$
So, $\log Jh^{+}$ is a formal solution of \eqref{cohomological.equation}. \par
We will see that $Jh^+$ is measurable and coincides with a continuous function $\mu$-almost everywhere. 
\subsection{The case $\lambda^{c}_{A}<0$}\label{subsection.lambda.neg}
Assume that the measure of maximal entropy $\mu$ is an SRB measure. Then $\lambda_\mu^c(f)<0$ (Theorem \ref{thm.ures}.) This implies that $W^{+}(x)=W^{u}(x)$ $\mu$-almost every $x\in\T^{3}$, since $\dim W^{+}(x)=$number of positive Lyapunov exponents=1.\par\vspace{.3em}

Given a point $z$, take a cubic box $C(h(z))$ with a side $L>0$ centered on $h(z)$ and subfoliated by the local ``linear foliations '' $E^s_A, E^{c}$, and $E^{u}_{A}$. For simplicity, assume that the axes $E^{\sigma}_{A}$ are orthogonal. Denote by $D(z)=h^{-1}(C(h(z)))$. $L$ can be taken to be uniform for all $z\in\T^{3}$. \par\vspace{.3em}

 $D(z)$ is a neighborhood of $z$ partitioned by bounded subarcs of $W^{+}(x)$, with $x\in D(z)$. For each $x\in D(z)$, $\mu^{+}_x$ is the conditional measure associated with the restriction of $\mu$ to $D(z)$, and $m^{+}_{x}$ is the measure induced by the Riemannian structure of $W^{+}(x)$ of the connected component of $W^{+}(x)$ intersected with $D(z)$ containing $x$.
 \begin{notation}\label{W.+.D}
$W_D^{+}(x)$ is the connected component of $W^{+}(x)$ intersected with $D(z)$ that contains $x$.   
 \end{notation}
\vspace{.5em}
Since $\mu^{+}_{x}<< m^{+}_{x}$ for $\mu$-almost every point $x$, there exists an $L^{1}$ density function $\rho_{x}^{+}$ such that for every interval $[a,b]^{+}\subset W^{+}_{D}(x)$, we have:

\begin{equation}\label{rho.u.m}
\mu^{+}_{x}([a,b]^{+})=\int_{a}^{b}\rho_{x}^{+}(\xi)dm^{+}_{x}(\xi) 
\end{equation}
\begin{remark}
   For a $C^{1+\alpha}$ diffeomorphism $f$, the density function $\rho^{+}_{x}$ is more than $L^{1}$: it is continuous and positive, and it satisfies the following Pesin-Sinai equations:
\vspace*{1em}
\begin{equation}\label{rho+}
\rho^+_{x}(\xi) =  \frac{\Delta^+(x,\xi)}{ \int_{W_D^{+}(x)} \Delta^+(x,\xi) dm^{+}_{x}},\qquad \Delta^+(x,\xi) = \frac{\prod_{i=1}^{+\infty} Jf^{+}(f^{-i}(x) )}{\prod_{i=1}^{+\infty} Jf^{+}(f^{-i}(\xi) )}. 
\end{equation}
\vspace*{1em}\\
Both $(x,\xi)\mapsto\rho^{+}_{x}(\xi)$ and $(x,\xi)\mapsto\Delta^{+}(x,\xi)$ are continuous. 
This is because $L>0$ is uniform, and then the sizes of $D(z)$ are continuous. (See \cite [Proposition 2, item (2)] {1982pesinsinai}.)
\end{remark}

\begin{claim}\label{claim.pushforward}
$$h_{*}(\mu^{+}_{x})=\frac{1}{L}m^{+}_{h(x)}\quad \text{for $\mu$-almost every $x$}.$$
\end{claim}
\bp
The claim is a consequence of the following lemma:\vspace{0.5em}
  \begin{lemma}\cite{U} \label{isomorphic} If $f\in\cDA^{1+\alpha}(A)$, then
$(f,\mu)$ and $(A,m)$ are metrically isomorphic, and
\begin{equation}\label{h.lleva.mu.en.m}
 h_{*}(\mu)=m,
\end{equation}
The set $h^{-1}(h(x))$ is connected and is contained in $W^{c}(x)$ for all $x\in \T^{3}$.
\end{lemma}
\vspace*{1em}
$h$ takes unstable leaves with respect to $f$ into unstable leaves with respect to $A$ (\cite{U}), and sends the measure $\mu^+(x)$ - equivalent to volume on $W^u(x)$ - into the normalized volume measure on $E_A^u(h(x))$. Then, it sends the volume on $W^u(x)$ into a measure equivalent to Lebesgue on $E_A^u(h(x))$. ({\bf Exercise:} Check that the claim is true from this point on.) \ep{Claim \ref{claim.pushforward}}
The claim \ref{claim.pushforward} implies that $h$ has an unstable Jacobian $Jh^+$. $Jh^+$ only depends on the Riemannian metric. So, it does not depend on the choice of the box $C(h(z))$ (see Definition \ref{jacobian}). This implies the following \vspace{.3em}

\begin{claim}
For all $x\in\T^3$ and $\mu$-almost every $x \in D(z)$, all $f$-unstable segments $[a,b]^{+}$ in $W^{+}_{D}(x)$ satisfy this equation:
\begin{equation}\label{rho.u.m.h}
 \mu^{+}_{x}([a,b]^{+})= \frac{1}{L}m^{+}_{h(x)}([h(a),h(b)]^{+})=\frac{1}{L}\int_{a}^{b} Jh^{+}(\xi)dm^{+}_{x}(\xi). 
\end{equation}
\end{claim}

The formulas \eqref{rho.u.m} and \eqref{rho.u.m.h} together imply that
$$Jh^{+}(\xi)= L \rho_{x}^{+}(\xi)\qquad \mu-\text{almost every }\xi \in   D.$$

 Since $\log \rho^{+}_{x}$ is continuous, $\log Jh^{+}$ extends to a continuous function on $D(z)\cap \supp \mu$. So, $Jh^+$ extends to a continuous function on $\supp \mu$. 
\ep{Theorem \ref{thmA}, case $\lambda^c_A<0$}

\subsection{The case $\lambda^{c}_{A}>0$.} 
\begin{proposition}
    \label{infiniterad}
    If $f\in\cDA^{1+\alpha}(A)$ with $\lambda^c_A>0$ and the measure of maximal entropy $\mu$ is an SRB probability measure, then $$W^+(x)=W^{cu}(x)\qquad\text{for}\quad\mu-\text{almost every }x.$$
\end{proposition}
The above proposition is a particular case of \cite[Proposition 5.2]{RHUY}. We include a proof of it in our simpler setting, to improve readability.\par\vspace{1em}

\bp
$\lambda^c_f(\mu)>0$ (Theorem \ref{thm.ures}), so $\dim W^+(x)=2$, and $W^+(x)$ contains a $2$-dimensional disc centered at $x$ for $\mu$-almost every $x$.\par\vspace{.3em}

The easy part is to prove that $W^+(x)$ is contained in $W^{cu}(x)$ for $\mu$-almost every $x$, and it is left to the reader. \par\vspace{.3em}

The difficult part is the other inclusion. $W^+(x)$ contains a 2-disc centered at $x$. If this disc does not "colapse" under the action of $h$, and $h(W^+(x))$ contains a disc centered at $h(x)$, then the linear action of $A$ will make this disc grow uniformly - playing with successive $A^n\circ h\circ A^{-n}$ -. The fact that $h$ is {\bf proper} (the reader can look for this meaning) will show that $W^+(x)$ occupies the whole immersed plane $W^{cu}(x)$.\par\vspace{.3em}

$h$ is injective $\mu$-almost everywhere (Lemma \ref{isomorphic}). Then, we can consider that $h$ is injective in every $x\in\cP$ (Notation \ref{regular.points}). This implies that the $h$-image of $W^+(x)\cap W^c(y)$ contains a neighborhood of $h(y)$ - a positive radius center segment centered at $h(y)$ - in $E^c_A(h(y))$ for all $x\in\cP$ and $y\in W^+(x)$ (recall that $h$ only identifies points in the same center leaf, see Lemma \ref{isomorphic}.) \par\vspace{.3em}

%
%
\vspace{0.3em}
There is a full measure set $\cR$ on which $h$ is injective. $h$ only collapses center segments, so $h(W^+(x)\cap W^c(x))$ contains a neighborhood in $E^c_A(x)$ for all $x\in\cR$. Because $m(h(\cR))=1$, there is an $m$-positive measure set $B$ so that $h(W^+(x)\cap W^c(c))$ contains a segment of radius $\delta>0$ centered at $x$ for all $x\in B$.\par
The ergodicity and invariance of $m$ with respect to $A$ imply that $h(W^+(x)\cap W^c(x))=E^c_A(h(x))$. Then $W^+(x)\cap W^c(x)=W^c(x)$ for $\mu$-almost every $x\in\cR$.\par
The reader may complete the details. 
\ep{Proposition \ref{infiniterad}}     
\vspace{0.3em}
\begin{proposition}\label{proposition.mu.x0}
    If $z$ is any point in $\T^3$, and $C=C(h(z))$ is a cubic box with a side $L>0$ (as defined in Subsection \ref{subsection.lambda.neg}), call $D=D(z)=h^{-1}(C(h(z))$. Then, for all $x_0\in D\cap \cP$ and all measurable sets $B\subset W^+_D(x_0)=W^{cu}_D(x_0)$,    \begin{equation}\label{equation.m`u+1}
\mu^{+}_{x_{0}}(B)=\frac{1}{L^{2}}\int_{B} Jh^{+}(\xi)dm^{+}_{x_{0}}(\xi).
\end{equation}
\end{proposition}
\begin{proof}
$D$ is foliated by discs $W^{cu}_D(x)$, and for all measurable sets $B$ contained in $W^+_D(x_0)=W^{cu}_D(x_0)$:
\[\mu^{+}_{x_{0}}(B)=\int_{B}\rho_{x_{0}}^{+}(\xi)dm^{+}_{x_{0}}(\xi),\]
(the function $\rho_{x_{0}}^{+}$ is defined by Equation \eqref{rho+}. 
)

Since $h_{*}(\mu^{+}_{x})=\frac{1}{L^{2}}m^{+}_{h(x)}$  $\mu$-almost every $x\in D$ (the proof is as in Subsection \ref{subsection.lambda.neg}),  
\[\mu^{+}_{x_{0}}(B)= \frac{1}{L^{2}} m^{+}_{h(x_{0})}(h(B)).\]
Then, 
\[\mu^{+}_{x_{0}}(B)= \frac{1}{L^{2}}\int_{B} Jh^{+}(\xi)dm^{+}_{x_{0}}(\xi).\]
\end{proof}
In Subsection \ref{subsection.lambda.neg} ($\lambda^{c}_{A}<0$) the continuity of $\rho_{x}^{+}$ follows directly from formulas \eqref{rho+} and from uniform hyperbolicity. \par\vspace{.3em}

This case is more elaborated. The continuity of $\rho^{+}_{x}$ is only clear when restricted to a local Pesin unstable manifold $W^{+}_D(x)$, with $x\in\cP$. Its continuity on $\T^{3}$ is not trivial.\par\vspace{.3em}

 We build a continuous extension of $ \rho^{+}_{x_{0}}$ that coincides with $\mu$-almost everywhere with $\frac{1}{L^2} Jh^{+}$. 
We then extend $Jh^{+}$ continuously on $D$.\par\vspace{.3em}

\begin{definition}
    If $x$ and $y$ belong to some $D(z)$, the {\bf stable holonomy map} between $x$ and $y$ is the continuous function
    \begin{eqnarray*}
        H:&W^{+}_D(x)\to &W^+_D(y)\\
        &w\hspace{1.3em}\longmapsto& W^s_D(w)\cap W^+(y).
    \end{eqnarray*}
    
\end{definition}
If $x\in D$ and $H$ is the stable holonomy map between $x$ and $x_{0}$, define
\vspace{.5em}
\begin{equation}\label{rho+holonomy}
\rho^{+}_x:=\rho^{+}_{x_{0}}(H(x))JH^{-1}(x), 
\end{equation}

\vspace{0.5em}
 where $JH:D\to \R^+$ is the Jacobian of $H$. The Jacobian $JH$ is continuous and positive \cite[Theorem 2.1]{1972pughshub}, and so is $\rho^{+}$. $JH$ is continuous on $D$. The formula for $JH$ on $D$ is
 \begin{equation}\label{holonomy}
 JH(x)\:=\: \prod_{j=0}^\infty \frac{Jf^{cu}(f^j(x))}{Jf^{cu}(f^j(H(x)))}
 \end{equation}
\vspace{0.3em}

The uniform convergence of the formula in $D$ is proved in \cite[Theorem 2.1]{1972pughshub}  \eqref{holonomy}. (The continuity of $JH$ on a given transversal appears more frequently in the literature, but the continuity of $JH$ on $D$ follows from formula \eqref{holonomy}. See below.)\par\vspace{.3em}

The continuity of $JH$ on $D$ follows from the H\"older continuity of $Jf^{cu}$: \vspace{.3em}

If $y\in W^s_D(x)$, then 
 $$ \left|\log JH(x)- \log JH(y)\right| \quad\leq\quad \left|\sum_{j=0}^\infty  \log Jf^{cu}(f^j(x)) - \log Jf^{cu}(f^j(y))  \right|\,,$$
because $H(x)=H(y)$. Since  $\log Jf^{cu}$ is H\"older, there are positive constants $K,C$, and $\alpha$ such that
 $$ |\log JH(x)- \log JH(y)|\quad \leq\quad \sum_{j=0}^\infty C\, d(f^j(x), f^j(y)^\alpha\quad\leq\quad K\,d(x,y)^\alpha\,.$$
 
The last inequality is because $y\in W^s_D(x)$, so $JH$ is H\"older continuous on stable plaques. \par\vspace{.3em}
\begin{claim}
The continuous function $\rho^{+}$ coincides with $\rho^{+}_{x}$ (Equation \eqref{rho+holonomy}) $\mu$-almost every $x\in D$.   
\end{claim}
\begin{notation}\label{W.s.u.C}
    For a fixed $z'\in h^{-1}(z)\in\T^3$, $C=C(z')$ is $h(D(z))$ and $W^s_C(x')$ is the connected segment of $W^s(x')\cap C$  that contains $x'$. $W^u_C(x')$ is the connected $2$-disc (also called plaque) $W^u_A(x')\cap C$ that contains $x'$.\par
    We adopt the convention that, fixed $D=D(z)$, $x',y'$ are the $h$-preimages of $x$,$y$ that belong to $C=C(z')$.
\end{notation}
\begin{proof}
    $H_{A}$ - the stable holonomy map for $A$ - is isometric on unstable discs with respect to the measure $m^+$ for all $x,y\in C$ such that $y\in W^s_C(x)$. This means that:
\begin{equation}
m^+_y(H_A(B))=m^+_x(B)
\end{equation}
for all $m^+_x$-measurable sets $B$ contained in $W^u_C(x)$. ($H_A$ is a homeomorphism, so $H_A(B)$ is $m^+$-measurable.) {\bf Exercise:} check it. 
\vspace{.3em}
\begin{notation}
    If $H:E_x\to E_y$ is a continuous map, and $\mu_x$ is a measure on $E_x$, the {\bf push-forward measure} of $\mu_x$ on $E_y$ is the following measure:
    \begin{equation}
        H_*\mu_x(B)=\mu_x(H^{-1}(B)),\qquad\forall \;\text{measurable sets }B\subset E_y.
    \end{equation}
    $E_x$ and $E_y$ are any two random sets. 
\end{notation}
\vspace{.3em}
Choose $x'\in h^{-1}(x)$ and $y'\in h^{-1}(y)$, so that $x',y'\in D$. 
({\bf Check:} $h_{*}(\mu^{+}_{x'})= \frac{1}{L^{2}} m^{+}_{x}$).\par
Set $\nu_{x'}:=\left(H\right)_*\mu^+_{x'}$ and 
\begin{equation}\label{formula.nu}
\nu(B_D)=\int_{\T^3}\nu_{x'}(B_D\cap W^+_D(x')d\mu(x')\qquad\forall \,B_D\subset D\quad\text{measurable set}
\end{equation}
{\bf Check:}
\begin{enumerate}
    \item $H_*(\nu_{x'})=\frac{1}{L^2}m_x$.
    \item The density of $\nu$ is $\rho^+$ (Equation \eqref{rho+holonomy}).
\end{enumerate}
\end{proof}
\vspace{.5em}
A computation as in the proof of Proposition \ref{proposition.mu.x0} implies that $\frac{1}{L^{2}} Jh^{+}$ coincides $\mu$-almost everywhere with $\rho^{+}$ in $D$.  This implies that $Jh^{+}$ extends to a continuous function on $\T^3$. To avoid an excess of notation, we call this continuous extension $Jh^{+}$. $\phi=\log Jh^{+}$ is a continuous function that satisfies the cohomological equation \eqref{cohomological.equation}. This implies that $h_{top}(f)=\lambda^{+}_{f}(p)$ for all periodic points $p$, and finishes the proof of Theorem \ref{thmA}.
\ep{Theorem A}
\begin{cor}\label{corolario.fragilidad.propiedad}
If $\mu_{mme}(f)$ is the (unique) measure of maximal entropy for $f\in \cDA$, the set
$${\mathcal{F}\hspace{-.3em}r}=\{f\in \cDA: \mu_{mme}(f)\quad\text{is not an SRB measure}\}$$
is $C^1$-open in $\cDA$. $\mathcal{F}\hspace{-.3em}r\cap \diff^{\infty}(M)\cap\cDA$ is $C^\infty$ dense in $\cD$. 
\end{cor}

\bp If $f\in \cDA^{1+\alpha}$, then the unique measure of maximal entropy $\mu_{mme}(f)$ is hyperbolic (Theorem \ref{thm.ures}). Then hyperbolic periodic points are dense in the support of $\mu_{mme}(f)$, by A. Katok's closing lemma \cite{katok80}. By Theorem \ref{thmA}, $f\notin\Fr$ if and only if 
$$h_{top}(f)=h_{top}(A)=\lambda^{+}(p)\qquad\forall p\in\Per(f)\cap \supp(\mu_{mme}(f))\qquad \eqref{condition.thm.A}$$
\begin{lemma}\label{lemma.equality.Fr}
The set $\cDA\setminus\left(\Fr\right)
$ is equal to the set $$\left\{g\in\cDA:\:\exists\, p\in\Per_H(g)\cap\supp(\mu_{mme}(g))\;\text{so that } \lambda^+_g(p)\ne h_{top}(A)\right\}, $$
where $\Per_H(g)$ stands for the set of hyperbolic $g$-periodic points. $\cDA\setminus \Fr$ is $C^1$-open.
\end{lemma}
\bp
    {\bf Exercise:} Prove the equality. \par
    For each $f\in\cDA$, $g\mapsto \lambda^+_g(p_g)$ is a continuous function whose domain is a small $C^1$-neighborhood of $f$ ($p_g$ is the continuation of any given $p_f\in\Per_H(f)$.)\par
    So, if either $\lambda^+_g(p_g)>h_{top}(A)$ or $\lambda^+_g(p_g)<h_{top}(A)$ for some $g\in\diff^1(\T^3)$, this still holds in a $C^1$-neighborhood of $g\in\diff^1(\T^3)$.
\ep{Lemma \ref{lemma.equality.Fr}}
\begin{lemma}
 It is possible to $C^\infty$-perturb $f$ so that condition \eqref{condition.thm.A} is not satisfied for the continuation of an $f$-periodic point $p$.     
\end{lemma}
\bp\newline
{\bf Case 1 -} $\mathbf{\supp\mu_{mme}=\T^3:}$\quad
Compose $f$ with the time-one map of a  $C^\infty$ vector field $X$ supported in a small neighborhood of $p$. 
Any divergence-free $X$ is $m$-invariant. The flow is also $m$-invariant.  A computation like this can be found in Section \ref{proof.thm.B}.\vspace{.3em}
\ep{Case 1}
\vspace{.5em}
{\bf Case 2 -  }$\mathbf{\supp\mu_{mme}:}$ After a $C^\infty$-perturbation of $f$, it could happen that the continuation $p_g$ of an $f$-periodic point $p_f$ no longer belongs to $\supp\mu_{mme}(g)$. In fact, this is never the case in our setting: 

 If $K_{f}=\supp(\mu_{mme}(f))$ and $\lambda^{c}_{A}>0$, then $K_{f}=\T^{3}$ by Lemma \ref{lemma.s.minimal}. If $\lambda^{c}_{A}<0$, $K_{f}$ is the unique minimal set saturated by $W^{u}$-leaves (Theorem \ref{thm.ures}). Since the Hausdorff limit of compact $u$-saturated sets is compact and $u$-saturated, it follows that $f\mapsto K_{f}$ is lower semicontinuous in the $C^{r}$-topology, for all $r\geq 1$.  Then:
\begin{claim}\label{pg.in.Kg}
    If $p_f\in K_{f}$ is an $f$-hyperbolic periodic point with $\lambda_f^c(p_f)<0$, 
there exists a $C^{1}$-neighborhood ${\mathcal U}(f)$ of $f$, so that the continuation $p_{g}$ of $p_f$ for $g$ belongs to $K_{g}=\supp(\mu_{g})$ for all $g\in{\mathcal U}(f)$ .  
\end{claim} 
\bp
$\dim W^s(p_f)=2$ and $W^s(p_g)$ -also two dimensional - is $C^1$-close to $W^s(p_f)$ if $g$ is $C^1$-close to $f$. The lower semicontinuity of $f\mapsto K_{f}$ implies $K_g\cap W^s(p_g)\ni z$. If $N$ is the period of $p_g$, then $W^s(p_g)\cap K_g\ni g^{kN}(z)\to p_g$ when $k\to\infty$ ($K_g$ is $g$-invariant and $W^s(p_g)$ is $g^N$-invariant).
    $p_{g}$ belongs to $K_{g}$, because $K_g$ is closed.
\ep{Claim \ref{pg.in.Kg}+Case 2}

The Corollary follows from Theorem \ref{thmA}.
\ep{Corollary \ref{corolario.fragilidad.propiedad}}
\begin{lemma}\label{lemma.s.minimal}
 If the center Lyapunov exponent  $\lambda^c(A)$ of $A$ is positive, and the measure of maximal entropy $\mu_{mme}(f)$ of $f\in\cDA^{1+\alpha}(A)$ is an SRB measure, then $\supp(\mu_{mme}(f))=\T^3$.  The stable foliation $\mathcal W^{s}$ is a minimal foliation (Definition \ref{minimal.set}). 
\end{lemma}
\begin{proof}
$\supp(\mu_{mme}(f))$ is the only minimal set with respect to the stable foliation $W^s$ of $f$ (Theorem \ref{thm.ures}).\par 
$\supp(\mu_{mme}(f))$ also contains $W^+(x)$ for $\mu_{mme}(f)$-almost every $x\in\supp_{mme}(f)$ - see Proposition \ref{supp.+.saturated}.\par
So, take $W^+_{loc}(x_0)$ contained in $\supp(\mu_{mme}(f))$. It is transverse to the foliation $W^s$ in a neighborhood of $x_0$. $W^s(x)\subset \supp(\mu_{mme}(f))$ for all $x\in W^+_{loc}(x_0)$ ({\bf Why?}). \par
This implies that $\supp(\mu_{mme}(f))$ contains a non-empty open set $O$,
$$O=\bigcup\left\{W^s_{loc}(x):\: x\in W^+_{loc}(x_0)\right\}.$$
Since $\supp(\mu_{mme}(f))$ is the only minimal set with respect to the stable foliation of $f$, 
$$\supp(\mu_{mme}(f))\subset \overline{W^{s}(x)}\qquad \forall x\in \T^{3}. $$

\vspace{.5em}  
Then $O\cap W^s(x)\ne\emptyset$ for every $x\in\T^3$ ({\bf Check}).
Then $\supp(\mu_{mme}(f))=\T^3$ and $W^s$ is a minimal foliation.
\end{proof}
%
\section{Proof of Theorem \ref{thm.examples}}\label{proof.thm.B}

Let $A:\T^{3}\to \T^{3}$ be a linear Anosov diffeomorphisms with $\lambda^{c}_{A}<0$.
We will perform a small perturbation in a neighborhood of the fixed point along the center-stable direction, \`a la Ma\~n\'e.\par
As in \cite{RHUY}, two things have to be taken care of: 
\begin{enumerate}
    \item the perturbation has to be partially hyperbolic,
    \item the modulus of the expansion along the one-dimensional unstable direction has to be the same.
\end{enumerate}
 The unstable direction of the perturbation $f$ will be different from the unstable direction of $A$.
 \vspace{.8em}



 Consider a vector field $X:\T^{3}\to T\T^{3}$ supported in a small ball containing a fixed point $p$ of $A$. $X$ satisfies:
\begin{enumerate}
 \item $X(p)=(0,0,0)$,
 \item $X(x)\in E^{cs}_{A}(x)$ for all $x\in\T^{3}$. \label{cs.invariant}
 \item\label{divergence} If we want the perturbation to be non-conservative, it is enough to ask that the divergence of $X$ at $p$ satisfy $\div(X)(p)\ne 0$,
\end{enumerate}
There is a lot of freedom to fulfill these requirements. \par
\vspace{.5em}

Let $\varphi_{t}:\T^{3}\to\T^{3}$ be the flow of $X$. $f:\T^{3}\to \T^{3}$ is
\begin{equation}\label{examples}
 f(x)=A\varphi_{1}(x)
\end{equation}
The choice of the time-one map of $\left\{\varphi_t\right\}_{t>0}$ is nothing special. One can choose any $t>0$. The condition (\ref{cs.invariant}) above implies that $f(E^{cs}_{A}(x))=E^{cs}_{A}(f(x))$, then the foliation $E^{cs}_{A}$ is $f$-invariant. ({\bf Check.}) \par
This implies:
\begin{claim}\label{lyapunov.perturbado}
The unstable Lyapunov exponent of $f$ is defined for all $x\in \T^{3}$ and  $$\lambda^{u}_{f}(x)=\lambda^{u}_{A}.$$
\end{claim}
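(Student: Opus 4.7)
The plan is to exploit the $Df$-invariance of $E^{cs}_A$ (established in the sentence preceding the claim) to reduce $\lambda^u_f(x)$ to the induced action of $Df$ on the one-dimensional quotient line bundle $T\T^3/E^{cs}_A$. Since $A$ is linear and $E^{cs}_A$ is a fixed $2$-plane in $\R^3$, this quotient is canonically trivial with typical fibre $E^u_A$; writing $\overline{(\,\cdot\,)}$ for the descent to the quotient, the heart of the argument will be to show that $\overline{Df}$ is uniform multiplication by $e^{\lambda^u_A}$, so that the unstable cocycle of $f$ differs from the constant cocycle $e^{\lambda^u_A}$ by a continuous coboundary.

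The first step is to show that $\overline{D\varphi_t}=\mathrm{id}$ on the quotient. Because $X$ is tangent to $E^{cs}_A$, the flow $\varphi_t$ preserves each leaf of the foliation $W^{cs}_A$; in the universal cover $\R^3$ these leaves are affine $2$-planes parallel to $E^{cs}_A$, so the linear projection $\pi\colon\R^3\to\R^3/E^{cs}_A$ is constant on them and $\pi\circ\varphi_t=\pi$. Differentiating yields $\overline{D\varphi_t}=\mathrm{id}$, and composing with $\overline{DA}=e^{\lambda^u_A}\mathrm{id}$ gives $\overline{Df}(x)=e^{\lambda^u_A}\mathrm{id}$ at every $x\in\T^3$.

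Next I would verify that $E^u_f(x)\cap E^{cs}_A(x)=\{0\}$ for every $x$. For $t$ small, $f$ is $C^1$-close to $A$, so by structural stability of the Anosov splitting the bundle $E^u_f$ is $C^0$-close to $E^u_A$ and hence transverse to $E^{cs}_A$ everywhere. Choose a continuous unit section $u(x)\in E^u_f(x)$ (passing to an orientation double cover if needed; this affects nothing in what follows) and write $\pi(u(x))=c(x)\,e_u$ with $e_u$ spanning $E^u_A$; transversality plus compactness of $\T^3$ give $\inf_x|c(x)|>0$. Setting $\alpha(x)=\|Df|_{E^u_f(x)}\|$, the identity $Df(u(x))=\alpha(x)u(f(x))$ combined with $\overline{Df}(\pi(u(x)))=e^{\lambda^u_A}c(x)e_u$ yields the cohomological relation
\begin{equation*}
\log\alpha(x)=\lambda^u_A+\log|c(x)|-\log|c(f(x))|.
\end{equation*}

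Birkhoff-summing and telescoping the coboundary then gives
\begin{equation*}
\frac{1}{n}\log\|Df^n|_{E^u_f(x)}\|=\lambda^u_A+\frac{1}{n}\bigl(\log|c(x)|-\log|c(f^n x)|\bigr),
\end{equation*}
and since $\log|c|$ is uniformly bounded on $\T^3$ the error vanishes as $n\to\infty$, for \emph{every} $x\in\T^3$. Hence $\lambda^u_f(x)$ exists everywhere and equals $\lambda^u_A$. The delicate step I expect to be the main obstacle is justifying that $c$ is nowhere zero, i.e.\ the pointwise transversality $E^u_f\pitchfork E^{cs}_A$ (which is where the smallness of $t$ enters, via $C^1$-closeness of $f$ to the Anosov map $A$); everything else is a routine quotient-cocycle manipulation.
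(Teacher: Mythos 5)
Your proof takes essentially the same approach as the paper. The paper's one-line argument is that $E^u_f$ lies in a uniformly sized cone about $E^u_A$, and that $f$-invariance of the $E^{cs}_A$-planes forces the length of an $f$-unstable segment under $f^n$ to be commensurate with $e^{n\lambda^u_A}$ times its length; your quotient bundle $T\T^3/E^{cs}_A$ and the coboundary $\log|c|$ are simply a clean, explicit implementation of exactly that commensurability. The derivation $\overline{D\varphi_t}=\mathrm{id}$ from tangency of $X$ to the flat affine foliation $W^{cs}_A$ is correct, and the telescoping is valid once $\inf_{x}|c(x)|>0$ is secured.

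The one step that needs a different justification is precisely the one you flagged: the transversality $E^u_f\pitchfork E^{cs}_A$. You derive it from $C^1$-closeness of $f$ to $A$ (small $t$, $X$ small in $C^1$). But the paper needs the claim to hold also for the non-Anosov examples, which are produced by taking $X$ only $C^0$-close to $0$ while deliberately \emph{not} $C^1$-small; in that regime $f$ is not $C^1$-close to $A$ and structural stability of the Anosov splitting is unavailable. The correct justification is the one the paper actually invokes: the construction maintains partial hyperbolicity, and since $E^{cs}_A$ is $Df$-invariant while (by your own quotient computation) the rate transverse to $E^{cs}_A$ is exactly $e^{\lambda^u_A}$, which dominates the growth of $Df|_{E^{cs}_A}$, the usual unstable cone about $E^u_A$ is forward $Df$-invariant and $E^u_f$ sits in it uniformly on $\T^3$. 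This is the ``uniformly sized cone about the unstable direction of $A$'' appearing in the paper; it gives $\inf_{x}|c(x)|>0$ independently of the $C^1$-size of the perturbation. With that substitution your coboundary argument is complete and covers all the examples the theorem requires.
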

\bp The proof of this is classic. You can find it, for example, in \cite{SW}. The original proof is due to Ma\~n\'e \cite{1995Manhe}. A brief explanation is that the unstable bundle of $f$ is contained in a uniformly sized cone around the unstable direction of $A$. The fact that $f$ takes $E^{cs}_{A}$-planes to $E^{cs}_{A}$-planes implies that the length of $f^{n}(\alpha)$ is proportional to $[\lambda^{u}_{A}]^{n}*$(length $\alpha$) for each $f$-unstable segment $\alpha$. This implies the claim. \ep{Claim \ref{lyapunov.perturbado}}

\par
\begin{claim}\label{unique.gibbs}
$f$ has a unique $u$-Gibbs measure $\mu$. $\mu$ is and SRB measure and the measure of maximal entropy.
\end{claim}
\bp 
There is always a Gibbs measure for a partially hyperbolic diffeomorphism $f$ \cite{1982pesinsinai}. Dolgopyat has written an excellent survey on the topic \cite{D}. \par

From \cite{LY}, 
$$h_{\mu}(f)\geq \lambda^{u}_{f}(\mu).$$
Claim \ref{lyapunov.perturbado} implies
$$\lambda^{u}_{f}(\mu)=\lambda^{u}_{A}=h_{top}(A).$$
Theorem \ref{thm.ures} implies $h_{top}(A)=h_{top}(f)$. 
Then $\mu$ is the only measure of maximal entropy. \par $\lambda^{c}(\mu)<0$ (Theorem \ref{thm.ures}), then the unstable Pesin manifold $W^+(x)$ of $x$ coincides with the strong unstable manifold $W^{u}(x)$ of $x$, $\mu$-almost every $x\in\T^3$. \par
So, $\mu$ is also an SRB measure. \ep{Claim \ref{unique.gibbs}}
\vspace{.5em}

 If $X$ is $C^{1}$ close to $0$, $f$ in \eqref{examples} is Anosov. 
 \par
 
We can also choose a vector field $C^0$, but not $C^1$ close to $\vec{0}$ $X$ that keeps the property (\ref{cs.invariant}) true.  Then $f$ as in \eqref{examples} is a DA diffeomorphism, $f\in\cDA(A)$. There is plenty of freedom to find a non-Anosov $f$ that satisfies condition (1) of Theorem \ref{thm.examples}. ({\bf Check}.) \par
\vspace{.5em}
With the same technique, it is also possible to find $f$ so that
$$\lambda^c(f)\ne\lambda^c_A\qquad\text{and}\qquad \lambda^s(f)\ne\lambda^s_A$$
satisfies condition (2). ({\bf Check}.)\ep{Theorem \ref{thm.examples}}

\begin{question}\label{question}
Are there examples that fulfill the conditions of Theorem \ref{thm.examples} that are neither Anosov nor volume-preserving when $\lambda^{c}_{A}>0$?
\end{question}

 \section{Proof of the Main Theorem}\label{section.proof.mainthm}
 
The theorem below assumes that $f$ is  a $C^{\infty}$ volume-preserving {\bf Anosov} map satisfying \eqref{lyapunov.rigidity}, plus that all eigenvalues are real and distinct, and concludes that the conjugacy is smooth. \par 
Many rigidity results assume that $f$ is Anosov, for instance \cite{MM1,MM2,Lla,LlaM,GG,GKS1,GKS2,SY} just to mention a few. \par
\ref{corolario.saghin.yang}

\begin{theorem}\cite{SY}
    If $f\in\cDA^\infty(A)$ is a volume-preserving Anosov diffeomorphism  with
simple real eigenvalues with distinct absolute values and
$$\lambda^s(f)=\lambda^s_A,\qquad \lambda^c(f)=\lambda^c_A,\qquad\text{and}\qquad \lambda^u(f)=\lambda^u(A),$$
 then f is $C^\infty$ conjugate to $A$.
\end{theorem}

Suppose that $f$ is in $\cDA^{1+\alpha}(A)$ and 
 $$\lambda^s(f)=\lambda^s_{A},\qquad \lambda^c(f)=\lambda^c_A,\qquad\text{and }\qquad\lambda^u(f)=\lambda^u_A.$$
We want to prove that $f$ is Anosov. Assume that $\lambda^{c}(f)=\lambda^{c}_{A}>0$.\par\vspace{.5em}
From the Pesin entropy formula \eqref{pesin.entropy.formula} and the equalities above it follows that $m$ is the entropy maximizing measure for $f$, since
$$h_{m}(f)=\lambda^{+}(f)=\lambda^{+}_{A}=h_{m}(A)=h_{top}(A).$$
The semiconjugacy $h$ takes the entropy maximizing measure for $f$ into Lebesgue measure by Theorem \ref{thm.ures}. So,
$$h_{*}(m)=m.$$

\subsection{Basic notations}
These concepts can be found in \cite{LY1}. We briefly summarize them here for self-containment.
\begin{enumerate}
\item A $\mu$-measurable partition is {\bf increasing} with respect to a $\mu$-preserving homeomorphism $g:\T^{3}\to \T^{3}$ if $\eta$ {\bf refines} $g\eta$, i.e. $g(\eta(x))\supset\eta(g(x))$ $\mu$-almost every $x$, where $\eta(x)$ denotes the element of $\eta$ that contains $x$. We denote a $g$-increasing partition $\eta$ by $\eta>g\eta$.
\item For $\mu$-measurable partitions $\eta$ and $\nu$, $H_{\mu}(\eta\mid\nu)$ denotes the {\bf mean conditional entropy} of $\eta$ given $\nu$, i.e.

\[
H_\mu(\eta \mid \nu) =-\int_{T^{3}} \mu_{\eta}^{x}(\nu(x))\,.\,\mu_{\eta}^{x}(\eta(x) \mid \nu(x))\,.\,\log \mu_{\eta}^{x}(\eta(x)\mid \nu(x))\:\:d\mu(x), 
\]  
where 
\[ \mu_{\eta}^{x}(\eta(x)\mid\nu(x)) = \frac{\mu_{\eta}^{x}(\eta(x) \cap \nu(x))}{\mu_{\eta}^{x}(\nu(x))}, \]
and if \( \mu_{\eta}^{x}(\nu(x)) = 0 \) then we omit it, adopting the usual convention $0.\log 0=0$.  
$\mu_{\eta}^{x}$ is the normalized probabiliy meausre $\mu$, on the set $\eta(x)$.
\end{enumerate}
\vspace{.5em}

\subsection{Strategy} 
\begin{enumerate}
 \item Obtain an increasing partition $\eta$ subordinate to $E^{c}_{A}$ for which $H_{m}(\eta|A\eta)=\lambda^{c}_{A}$.
 \item Pull back the partition, defining $\xi=h^{-1}(\eta)$.
  \item Show that $H_{m}(\xi|f\xi)=\lambda^{c}(f)=\lambda^{c}_{A}$.
\end{enumerate}
\vspace*{1em}

For any partition $\eta$ subordinate to $E_{A}^{c}$ (the $A$-invariant center foliation) define $\xi(\eta)$ as the partition subordinate to $\F^{c}$, the $f$-invariant center foliation obtained by taking the pre-image  of $\eta$ via the semiconjugacy $h$. 
The partition $\xi(\eta)$ is well defined $m$-a.e. $x$ and is subordinate to $\F^{c}$ since $W^{c}_{f}(x)\subset W^{+}_{f}(x)$ $m$-almost every $x\in \T^{3}$ by the Lemma \ref{infiniterad}.\par\vspace{.3em}

 $m_{x}^{f,c}$ is the Lebesgue measure induced in $\xi(x)$ by the Riemannian structure of $\T^{3}$ on $W^{c}_{f,loc}(x)$. For $m$-almost every $x\in \T^{3}$, $\mu^{f,c}_{x}$ is the measure given by the Rokhlin decomposition, the one satisfying for any measurable set $B\subset \T^{3}$ the property:
\begin{equation}\label{rokhlin}
 m(B)=\int_{\T^{3}}\mu^{f,c}_{x}( \xi(x)\cap B)dm.
\end{equation}
\begin{lemma} \label{lemma.equivalent.measures} There is a partition $\xi$ subordinate to $\F^{c}_{f}$ so that the measures $m^{f,c}_{x}$ and $\mu^{f,c}_{x}$ are equivalent  for $m$-almost every $x\in\T^{3}$.  
\end{lemma}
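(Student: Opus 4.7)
The plan is to construct $\eta$ as a foliation-box partition on the $A$-side, define $\xi=h^{-1}(\eta)$, and then compare the conditional measures using two ingredients already in hand: the identity $h_{*}(m)=m$, and the fact that $m$ is an SRB measure for $f$ (a consequence of the matched Lyapunov exponents together with Pesin's entropy formula).

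First I would construct $\eta$. Pick a small foliation box $B\subset\T^{3}$ for $W^{c}_{A}$, of the form $B=R^{s}\times I^{c}\times R^{u}$ in the linear $E^{s}_{A}\oplus E^{c}_{A}\oplus E^{u}_{A}$ coordinates, where $I^{c}$ is a center interval of fixed length $L$. Let $\eta|_{B}$ be the partition of $B$ into the center arcs $\{(y^{s},y^{u})\}\times I^{c}$, and extend $\eta$ to $\T^{3}$ by any measurable rule that keeps every atom subordinate to $W^{c}_{A}$. Because $A$ is linear and $m$ is the product of the three one-dimensional Lebesgue measures along $E^{s}_{A}, E^{c}_{A}, E^{u}_{A}$, the Rokhlin disintegration of $m$ along $\eta$ satisfies $\mu^{A,c}_{y}=L^{-1}m^{A,c}_{y}$ for $m$-a.e.\ $y$. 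By Ures' Theorem, $h^{-1}(y)$ is contained in a single $f$-center leaf, and $h$ maps $W^{c}_{f}$-leaves into $W^{c}_{A}$-leaves; hence each atom $\xi(x)=h^{-1}(\eta(h(x)))$ is contained in $W^{c}_{f}(x)$, and combined with the inclusion $W^{c}_{f}(x)\subset W^{+}_{f}(x)$ (noted above) the partition $\xi$ is subordinate to $\F^{c}_{f}$.

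Next I would transfer the disintegration. Since $h_{*}(m)=m$ and $\xi=h^{-1}(\eta)$, uniqueness of Rokhlin conditional measures gives
\[
h_{*}(\mu^{f,c}_{x})=\mu^{A,c}_{h(x)}=L^{-1}m^{A,c}_{h(x)}\qquad m\text{-a.e.\ }x,
\]
so $\mu^{f,c}_{x}$ is the $h$-pullback of normalised Lebesgue on the center arc $\eta(h(x))$. To compare this with $m^{f,c}_{x}$, I would use that $m$ is an SRB measure for $f$: the Lyapunov exponent hypothesis and Pesin's entropy formula give $h_{m}(f)=\lambda^{c}(f)+\lambda^{u}(f)$, so by Ledrappier--Young $m$ is SRB, and the conditional measures of $m$ along the two-dimensional Pesin unstable manifolds $W^{+}_{f}$ are equivalent to the induced Lebesgue, with a continuous positive density as in \eqref{rho+}. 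Disintegrating these conditionals further along the $1$-dimensional sub-foliation $\F^{c}_{f}\subset W^{+}_{f}$, using absolute continuity of the transverse strong unstable foliation $W^{u}_{f}$ inside $W^{+}_{f}$, yields that $\mu^{f,c}_{x}\ll m^{f,c}_{x}$ with a continuous and strictly positive density (a center analogue of the $\Delta^{+}$ formula). Continuity and strict positivity then give the reverse inclusion $m^{f,c}_{x}\ll\mu^{f,c}_{x}$, and hence equivalence.

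The main obstacle I foresee is showing that $h$ does not collapse a positive-measure set of intervals on $f$-center leaves: without this, the push-forward identity $h_{*}(\mu^{f,c}_{x})=L^{-1}m^{A,c}_{h(x)}$ cannot be inverted to identify $\mu^{f,c}_{x}$ with the $h$-pullback of Lebesgue. I expect this to follow from the matched positive center exponent $\lambda^{c}(f)=\lambda^{c}_{A}>0$ together with $h_{*}(m)=m$: a positive-measure collection of $x$ whose fibers $h^{-1}(h(x))$ contain a nondegenerate center arc would, after pushing forward, force a positive-Lebesgue set of center arcs on the $A$-side to support zero transverse mass, contradicting the product structure of $m$. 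Making this Fubini-type argument precise, and tying together the two absolute-continuity transfers on either side of $h$, is the principal technical step.
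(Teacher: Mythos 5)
Your argument reaches the right conclusion, but by a genuinely different route from the paper's. The paper's proof constructs $\eta$ from a Markov partition of $A$ (so that $\eta$ is increasing and generating), proves by direct computation that $H_{m}(\eta\,|\,A\eta)=\lambda^{c}_{A}$, transfers this to $H_{m}(\xi\,|\,f\xi)=\lambda^{c}(f)$ using the metric isomorphism $h$ and uniqueness of Rokhlin disintegrations, and then \emph{mimics Ledrappier's entropy criterion} (that $H_{m}(\xi|f\xi)=\lambda^{c}$ forces absolute continuity, with density given by a $\Delta^{c}$-formula whose boundedness yields equivalence), adapted here to the one-dimensional center subfoliation. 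You instead invoke the SRB property of $m$ (Pesin's formula for volume plus the Ledrappier--Young characterization) to get that the conditionals of $m$ along $W^{+}_{f}=W^{cu}_{f}$ are equivalent to two-dimensional Lebesgue with a continuous positive density, and then disintegrate once more along $\F^{c}\subset W^{cu}$ using absolute continuity of the product structure $\F^{c}\times W^{u}$ inside a $W^{cu}$-leaf. Both proofs work; what they buy differs. The paper's entropy route avoids any appeal to the regularity of the $W^{u}$-subfoliation inside $W^{cu}$, at the cost of re-running a nontrivial chunk of Ledrappier's argument for a subfoliation. Your route is more geometric and shorter once the subfoliation absolute continuity is granted, but that input (continuity and positivity of the holonomy Jacobian of $W^{u}$ restricted to a $W^{cu}$-leaf) is a substantive step that you assert in one clause and should be justified or referenced explicitly; it is also why you can claim the resulting one-dimensional density is continuous and strictly positive, which is what gives you equivalence rather than mere absolute continuity.

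Two smaller points. First, the Lyapunov-exponent hypothesis is not actually needed for $m$ to be SRB: Pesin's theorem already gives Pesin's formula for any volume-preserving $C^{2}$ diffeomorphism, so $m$ is always SRB here; the hypothesis only enters in identifying $m$ with the mme (used to get $h_{*}m=m$) and later in the proof of Theorem~C. Second, the ``main obstacle'' you flag --- inverting $h_{*}(\mu^{f,c}_{x})=L^{-1}m^{A,c}_{h(x)}$ --- concerns your first thread, which is in fact redundant for the lemma: once the SRB-plus-subfoliation thread gives $\mu^{f,c}_{x}$ equivalent to $m^{f,c}_{x}$ with continuous positive density, you are done, and no Fubini-type non-collapsing argument for $h$ on center arcs is required at this stage. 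The push-forward identity is, however, exactly the tool the paper uses (to compute $H_{m}(\xi|f\xi)$ and later to make sense of $Jh^{c}$), so it is worth keeping --- just not for this particular lemma.
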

\bp We loosely follow \cite[Th\'eor\`em 4.8]{ledrappier1984}. See \cite[Theorem A]{LY1} for a friendlier version. Pesin's formula implies absolute continuity. We use this idea adapted to the center foliation. \par
$m$ is ergodic. $\xi$ is an increasing partition subordinate to $\F^{c}$, see below. We show that $H_{m}(\xi|f\xi)=\lambda^{c}(f)=\lambda^{c}_{A}$ implies that $\mu^{f,c}_{x}$ is equivalent to $m^{f,c}_{x}$ for $m$-a.e. \vspace{.3em}

\begin{claim} \label{partition.subordinate.A}There is a partition $\eta$ subordinate to $E^{c}_{A}$ so that
  $H_{m}(\eta|A\eta)=\lambda^{c}_{A}$.
\end{claim}
\bp
Any partition $\eta$ subordinate to $E^{c}_{A}$ satisfies 
\begin{equation}\label{entropia.condicional.particion}
H_{m}(\eta|A\eta)=-\int\log \mu^{A,c}_{y}(\eta(y))dm(y).
\end{equation}
$\mu^{A,c}_{y}$ are the conditional measures associated with the partition $A\eta$. \par
If ${\mathcal N}$ is a Markov partition associated with $A$, one can obtain a partition subordinated to $E^{c}_{A}$ by taking intersections with the center lines of $E^{c}_{A}$ and considering connected components. This new partition $\eta$ satisfies the claim. \par\vspace{.3em}

$\eta$ is increasing with respect to $A$\, ($\eta(Ay)\subset A\eta(y)$ for $m$-almost all $y\in\T^{3}$). The reader can check it herself. Then, the partition $\eta$ is {\bf generating}:
$$\bigcap_{n\in\N} A^{-n}\eta(A^{n}y)=\{y\}.$$
Then,
$$\eta(z)\subset A\eta(y)\quad\forall z\in A\eta(y),\quad m{\text -a.e. }y\in \T^{3}.$$
The conditional measures for both $\eta$ and $A\eta$ are the normalized Lebesgue measure along the center space $E^{c}_{A}$ on each of the elements of each partition. So,
$$\mu^{A,c}_{y}(\eta(y))=\frac{|\eta(y)|}{|A\eta(y)|}=\frac{|A^{-1}\eta(y)|}{|\eta(A^{-1}(y))|}=\frac{\exp(-\lambda^{c}_{A})|\eta(y)|}{|\eta(A^{-1}(y))|},$$ where $|.|$ denotes the length.\\
Applying these equalities to Formula \eqref{entropia.condicional.particion}, we get:
$$H_{m}(\eta|A\eta)=-\int(\log(\exp(-\lambda^{c}_{A}))+\log|\eta(y)|-\log|\eta(A^{-1}(y))|)dm(y).$$
The fact that $m$ is $A$-invariant implies $H_{m}(\eta|A\eta)=\lambda^{c}_{A}$.
\ep{Claim \ref{partition.subordinate.A}}
Take the partition $\xi=h^{-1}(\eta)$. Check that the elements $\xi(x)$ of $\xi$ are connected. The claim below is a consequence of this definition.
\begin{claim}\label{claim.increasing.partition}
 The partition $\xi$ is increasing.
\end{claim}
\begin{claim}\label{metric.entropy.f}
$H_{m}(\xi|f\xi)=\lambda^{c}(f)$. 
\end{claim}
\bp
$h$ is a metric isomorphism between $(A, m)$ and $(f, m)$. 
Then
\begin{equation}\label{eq.rokhlin} h^{-1}_{*}(\mu^{A,c}_{x})=\mu^{f,c}_{h^{-1}(x)}\qquad m\text{-almost every }x\in\T^{3}.
\end{equation}

So, 
$$m(B)=m(h(B))=\int  \mu^{A,c}_{x}(h(B)) dm= \int h^{-1}_*  \mu^{A,c}_{x}(B) d(h^{-1}_* m)=\int h^{-1}_*  \mu^{A,c}_{x}(B) dm$$
for any measurable set $B$. The uniqueness of Rokhlin decomposition implies \eqref{eq.rokhlin}.\par\vspace{.3em}
Then, $H_{m}(\xi|f\xi)=H_{m}(\eta|A\eta)$ by definition of $\xi$.  
\ep{Claim \ref{metric.entropy.f}}
\begin{claim}\label{claim.equivalent.measures}
$m_{x}^{f,c}$ and $\mu_{x}^{f,c}$ are equivalent $m$-almost every $x\in\T^{3}$ for the partition $\xi$. 
\end{claim}
\bp
$\mu^{f,c}_{x}$ and $m_{x}^{f,c}$ are equivalent if and only if there is an $L^{1}$-function $\rho^{c}_x>0$ so that $\mu^{f,c}_{x}(y)=\rho^{c}_{x}(y).m^{f,c}_{x}(y)$ for $m$-almost $x\in\T^{3}$.\par
A long straightforward computation implies $\rho^{c}_{x}(y)$ has the form (see \cite[Theorem A]{LY1}):

\begin{equation}\label{rhoc}
\rho^c_{x}(y) =  \frac{\Delta^c(x,y)}{ \int_{W_D^{c}(x)} \Delta^c(x,y) dm^{c}_{x}},\qquad \Delta^c(x,y) = \frac{\prod_{i=1}^{+\infty} Jf^{c}(f^{-i}(x) )}{\prod_{i=1}^{+\infty} Jf^{c}(f^{-i}(y) )}. 
\end{equation}
Sicnce $y\mapsto \log\Delta^{c}(x,y)$ is H\"older and $\xi(x)\subset W^{c}_{loc}(x)\subset W^{+}_{loc}(x)$, $\log\Delta^{c}(x,y)$ is 
bounded from above and away from zero on $\xi(x)$ for $m$-almost $x\in\T^{3}$. 
\ep{Claim \ref{claim.equivalent.measures} + Lemma \ref{lemma.equivalent.measures}}

\begin{claim} \label{jacobian.defined.mae}
The center Jacobian $Jh^{c}$ is defined $m$-almost everywhere. It is measurable, and so is $\log Jh^{c}$.
\end{claim}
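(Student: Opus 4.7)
The plan is to exploit the fact that $h$ restricted to each center leaf of $f$ is a continuous monotone map, to transfer the equivalence of conditional measures from Lemma~\ref{lemma.equivalent.measures} into an equivalence of leafwise Lebesgue measures under $h$, and then to invoke the classical theory of monotone functions to extract $Jh^c$.

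First, I would record the structural observation that, since $h^{-1}(y)\subset W^c_f(y)$ and the center foliation is one-dimensional and orientable, $h|_{W^c_f(x)}$ is a continuous monotone map into $W^c_A(h(x))$. In particular, for the partition element $\xi(x)$ from Lemma~\ref{lemma.equivalent.measures} and the corresponding $A$-side element $\eta(h(x))$, one has $h(\xi(x))\subset \eta(h(x))$, and $h|_{\xi(x)}$ is a continuous monotone map between arcs.

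Next, I would show that this leafwise map pushes Lebesgue forward to a measure equivalent to Lebesgue. By Claim~\ref{claim.equivalent.measures}, the density $\rho^c_x$ of $\mu^{f,c}_x$ with respect to $m^{f,c}_x$ is bounded and bounded away from zero on $\xi(x)$ for $m$-a.e.\ $x$. Combining this with \eqref{eq.rokhlin}, which gives $h_*\mu^{f,c}_x=\mu^{A,c}_{h(x)}$, and with the explicit description of $\mu^{A,c}_{h(x)}$ as normalized Lebesgue on $A\eta(h(x))$ established inside the proof of Lemma~\ref{lemma.equivalent.measures}, it follows that $h_*(m^{f,c}_x)$ is absolutely continuous with respect to $m^{A,c}_{h(x)}$ with strictly positive bounded density. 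Thus $h|_{\xi(x)}$ is a monotone continuous map sending Lebesgue to an equivalent absolutely continuous measure on the image arc.

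The classical Lebesgue differentiation theorem for monotone functions then yields that $h|_{\xi(x)}$ is differentiable $m^{f,c}_x$-almost everywhere, with derivative equal (a.e.) to the Radon--Nikodym density of its push-forward, and in particular positive a.e. Setting $Jh^c(x):=(h|_{W^c_f(x)})'(x)$ wherever this derivative exists, and using \eqref{rokhlin} together with $\mu^{f,c}_x\sim m^{f,c}_x$, a Fubini argument shows that $Jh^c$ is defined $m$-a.e.\ on $\T^3$. For measurability, I would represent $Jh^c$ as the $m$-a.e.\ limit
\[
Jh^c(x)=\lim_{n\to\infty}\frac{|h(\xi_n(x))|}{|\xi_n(x)|}
\]
along a measurable refining sequence of subpartitions $\xi_n$ of $\xi$ subordinate to $\F^c$; each quotient is a measurable function of $x$ on $\T^3$, so $Jh^c$ is measurable, and positivity $m$-a.e.\ makes $\log Jh^c$ measurable as well.

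The main obstacle is this last measurability step: leafwise differentiability does not automatically yield joint measurability across the partition, so one must set $Jh^c$ up as a measurable pointwise limit (or as the Radon--Nikodym density of a measurable family of absolutely continuous push-forwards) rather than invoking Lebesgue's theorem leaf by leaf. Once this is handled, existence, positivity and measurability of $Jh^c$ all reduce to well-known one-variable statements about monotone absolutely continuous functions.
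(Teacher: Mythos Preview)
Your approach uses exactly the same ingredients as the paper's proof (Lemma~\ref{lemma.equivalent.measures} on the equivalence of conditional measures, the relation $h_*\mu^{f,c}_x=\mu^{A,c}_{h(x)}$ from \eqref{eq.rokhlin}, and the identification of the $A$-side conditionals as normalized Lebesgue on $\eta(h(x))$), but your extraction of $Jh^c$ is more elaborate than necessary. The paper simply reads off, from the chain
\[
\int_B \rho^c_x\,dm^{f,c}_x \;=\; \mu^{f,c}_x(B) \;=\; \mu^{A,c}_{h(x)}(h(B)) \;=\; \frac{1}{|\eta(h(x))|}\,m^{A,c}_{h(x)}(h(B))
\]
valid for all center subintervals $B\subset\xi(x)$, the identity $Jh^c = |\eta(h(x))|\cdot \rho^c_x$ $m$-a.e. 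Since $\rho^c_x$ is given by the explicit formula \eqref{rhoc} and $x\mapsto |\eta(h(x))|$ is measurable, measurability and positivity of $Jh^c$ (hence of $\log Jh^c$) are immediate. Your detour through Lebesgue's differentiation theorem for monotone functions and the separate joint-measurability argument via the quotients $|h(\xi_n(x))|/|\xi_n(x)|$ is correct but can be entirely bypassed by this direct identification; the ``main obstacle'' you flag simply does not arise once one has the closed-form expression for $Jh^c$.
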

\bp
Consider an interval $B$ in $\xi(x)\subset W^{c}_{f,loc}(x)$. Then, 

 $$\mu_{x}^{f,c}(B)=\int_{B}\rho_x dm_{x}^{f,c}=h_*\mu^{f,c}_x(h(B))=\frac1{|\eta_{h(x)}|}m^{A,c}_{h(x)}(h(B)).$$
So $Jh^{c}=|\eta_{h(x)}|.\rho_{x}^{c}$ \; $m$-almost everywhere. This shows that $\log Jh^{c}$ is a measurable function.
\ep{Claim \ref{jacobian.defined.mae}}
\begin{claim} \label{claim.cjacobian.satisfies.cohomological}$\log Jh^{c}$ satisfies the cohomological equation 
\begin{equation}\label{cohomological.equation.c}
\log Jf^{c}(x)-\lambda^{c}_{A}=\phi(x)-\phi(f(x))\quad m{\text -a.e. }x.  
\end{equation}
\end{claim}
\bp
 The center Jacobian of 
 $$h\circ f(x)=Ah(x)$$
is $m$-almost everywhere equal to
$$Jh^{c}(f(x))Jf^{c}(x)=JA^{c}Jh^{c}(x).$$
 \par\vspace{.3em}
 Its $\log$ is
 $$\log Jh^c(f(x))+\log Jf^c(x)=\lambda^c_A+\log Jh^c(x)$$
 and it is defined $m$-almost everywhere.
\ep{Claim \ref{claim.cjacobian.satisfies.cohomological}}
\begin{claim}
\label{claim.wilkinson} 
 Either $f$ is Anosov, or else $Jh^{c}$ coincides $m$-almost everywhere with a continuous function $\phi$ satisfying the cohomological equation \eqref{cohomological.equation.c}. 
\end{claim}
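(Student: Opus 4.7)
The plan is to promote the measurable solution $\log Jh^{c}$ of the H\"older cohomological equation \eqref{cohomological.equation.c} to a function that coincides $m$-almost everywhere with a continuous $\phi$, in the spirit of the sufficiency argument in Section \ref{sufficiency.thmA}. I may freely assume that $f$ is not Anosov, since otherwise the first disjunct of the claim holds trivially and there is nothing to prove.

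The first step is to note that, by the explicit formula \eqref{rhoc} for the density $\rho^{c}_{x}$ together with the H\"older regularity of $\log Jf^{c}$ inherited from the $C^{1+\alpha}$ hypothesis, the function $\log\rho^{c}_{x}(y)$ depends H\"older continuously on $y$ along each center plaque $\xi(x)$ for $m$-a.e.\ $x$. Combined with the identity $Jh^{c}(y)=|\eta_{h(x)}|\,\rho^{c}_{x}(y)$ established just above, this already produces a H\"older continuous representative of $\log Jh^{c}$ along each center plaque, giving a candidate for $\phi$ on a dense set of leaves.

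To propagate this leafwise continuity transversally, I use that $m$ is the m.m.e.\ of $f$, has full support, and $\lambda^{c}_{A}>0$. Since $m$ is Lebesgue and therefore SRB, Lemma \ref{lemma.s.minimal} applies and yields that $W^{s}$ is minimal. Minimality of $W^{s}$ immediately implies accessibility of $f$: any closed $su$-saturated set contains a closed $s$-saturated subset and hence equals $\T^{3}$. With accessibility in hand, I appeal to a Liv\v{s}ic-Wilkinson type rigidity theorem for accessible $C^{1+\alpha}$ partially hyperbolic diffeomorphisms, which promotes a measurable solution of a H\"older cohomological equation to one that agrees $m$-a.e.\ with a H\"older continuous function. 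Applied to \eqref{cohomological.equation.c}, this produces the desired $\phi$.

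The main technical obstacle lies in the transverse propagation of continuity: the center density $\rho^{c}_{x}$ is only defined up to a leafwise normalization, and stable and unstable holonomies are not in general center-Jacobian preserving, so the transport of $\rho^{c}$ across leaves requires careful Jacobian bookkeeping. Should a direct invocation of a black-box Liv\v{s}ic-Wilkinson theorem be insufficient, I plan an alternative constructive route paralleling Section \ref{sufficiency.thmA}: fix a reference point $x_{0}$ at which $\rho^{c}_{x_{0}}$ is continuous on $W^{c}_{loc}(x_{0})$, define $\phi(z)$ by transporting $\log\rho^{c}_{x_{0}}$ along an $su$-path from $x_{0}$ to $z$, and verify path-independence using the cohomological equation \eqref{cohomological.equation.c} together with accessibility. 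The non-Anosov hypothesis enters to ensure the construction does not degenerate; in the Anosov case the uniform center expansion makes the statement immediate.
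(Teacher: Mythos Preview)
Your core idea---reduce to accessibility and then invoke Wilkinson's Liv\v{s}ic theorem for the measurable solution $\log Jh^{c}$ of the H\"older equation \eqref{cohomological.equation.c}---is exactly the paper's argument. The difference lies only in how the dichotomy ``Anosov or accessible'' is obtained. The paper cites Gan--Shi directly: for a conservative DA on $\T^{3}$, non-accessibility already forces $f$ to be Anosov, so in the non-Anosov branch accessibility is automatic and Wilkinson applies. You instead route through Lemma~\ref{lemma.s.minimal} (valid here, since $m$ is simultaneously Lebesgue, SRB, and the m.m.e.) to get $W^{s}$ minimal, and then argue accessibility from minimality.

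There is a genuine, if small, gap in that last step. From $W^{s}$ minimal you correctly deduce that every nonempty \emph{closed} $su$-saturated set equals $\T^{3}$. But ``no proper closed $su$-saturated set'' is a priori only \emph{essential} accessibility; to get accessibility you still need that failure of accessibility produces a proper \emph{closed} $su$-saturated set. This is true in the one-dimensional center setting of $\T^{3}$ DA diffeomorphisms (there is then a compact periodic $su$-torus), but it is a nontrivial fact that you should cite rather than call ``immediate''. The paper's appeal to Gan--Shi bypasses this issue entirely.

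Two further comments. First, your opening paragraph on leafwise H\"older regularity of $\rho^{c}_{x}$ is unnecessary: Wilkinson's theorem takes as input only a measurable transfer function and a H\"older coboundary, and the previous claims already supply both. Second, the ``alternative constructive route'' you sketch---transporting $\rho^{c}$ along $su$-paths and checking path-independence---is essentially a by-hand reproof of Wilkinson's theorem in this special case; it would work, but it is redundant once you are willing to cite the theorem. The non-Anosov hypothesis is used only to place you in the accessible branch, not to prevent any degeneration of the construction.
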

\bp 
If $f$ is not accessible, then $f$ is Anosov, \cite[Corollary 1.1]{ganshi}.\par
Assume that $f$ is accessible. Wilkinson's theorem \cite[Theorem A (III)]{Wilkinson2013} implies $\log Jh^{c}$ coincides $m$-almost everywhere with a continuous function $\phi$ which is a solution of \eqref{cohomological.equation.c}. 
\ep{Claim \ref{claim.wilkinson}}
\begin{claim} \label{claim.f.Anosov}$f|_{E^{c}}$ is uniformly expanding. So, $f$ is Anosov.
\end{claim}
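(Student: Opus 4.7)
The plan is to promote the cohomological equation \eqref{cohomological.equation.c} from a statement about $Jh^c$ to a statement about $Jf^c$, and then use the boundedness of the coboundary to extract uniform expansion on the center bundle. By the previous claim I may assume that $\log Jh^c$ coincides $m$-almost everywhere with a continuous function $\phi:\T^3\to\R$ (otherwise $f$ is already Anosov and there is nothing to prove). Taking the center Jacobian of the identity $h\circ f=A\circ h$ and then logarithms yields, $m$-almost everywhere,
\[
\log Jh^c(f(x))+\log Jf^c(x)=\lambda^c_A+\log Jh^c(x).
\]
Substituting $\log Jh^c=\phi$, which is legitimate after both $x$ and $f(x)$ since $f$ preserves $m$, I would obtain
\[
\log Jf^c(x)=\lambda^c_A+\phi(x)-\phi(f(x))\qquad m\text{-a.e.}\ x\in\T^3.
\]
Both sides are continuous on $\T^3$: the left-hand side because $f$ is $C^{1+\alpha}$ and $E^c$ is a continuous bundle for partially hyperbolic diffeomorphisms on $\T^3$, the right-hand side because $\phi$ is continuous. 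Since $m$ is Lebesgue and therefore has full support, the equality extends to every $x\in\T^3$.

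Next, I would iterate this pointwise identity along the orbit of any $x$, obtaining the telescoping formula
\[
\sum_{k=0}^{n-1}\log Jf^c(f^k(x))=n\lambda^c_A+\phi(x)-\phi(f^n(x))\qquad \forall n\geq 1,\ \forall x\in\T^3.
\]
Because $\phi$ is continuous on the compact manifold $\T^3$, the coboundary $\phi(x)-\phi(f^n(x))$ is bounded uniformly by $2\|\phi\|_\infty<\infty$, independently of $x$ and $n$. Using that $E^c$ is one-dimensional, so $Jf^c(y)=\|Df(y)|_{E^c(y)}\|$, and that $\lambda^c_A>0$ by hypothesis, I can fix an integer $N$ large enough that $N\lambda^c_A-2\|\phi\|_\infty>\tfrac12 N\lambda^c_A$. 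This yields
\[
\|Df^N(x)|_{E^c(x)}\|\geq \exp\bigl(N\lambda^c_A-2\|\phi\|_\infty\bigr)\geq \exp\bigl(\tfrac12 N\lambda^c_A\bigr)>1\qquad \forall x\in\T^3,
\]
i.e. $Df^N$ uniformly expands $E^c$ in the original Riemannian metric.

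Finally, I would pass to an adapted metric in the standard way to upgrade this to $\|Df|_{E^c(x)}\|>1$ pointwise, and then combine with the splitting $T\T^3=E^s\oplus E^c\oplus E^u$ from partial hyperbolicity: $E^s$ is uniformly contracted, and $E^c\oplus E^u$ is now uniformly expanded, so $E^s\oplus(E^c\oplus E^u)$ is a hyperbolic splitting, i.e. $f$ is Anosov. The main conceptual step, and the one where the hypotheses are really used, is the transfer of the a.e.\ cohomological equation to a pointwise one via continuity and full support of $m$; once this is done, the uniform expansion estimate is automatic and Anosov-ness follows from the standard upgrade to an adapted metric.
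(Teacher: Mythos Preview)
Your proof is correct and takes a genuinely different route from the paper. The paper argues indirectly: from the continuous solution $\phi$ of the cohomological equation it integrates against an arbitrary invariant measure to conclude $\lambda^{c}(\mu)=\lambda^{c}_{A}>0$ for \emph{every} invariant $\mu$, and then reaches a contradiction by assuming non-uniform expansion on $E^{c}$ and building, via weak-$*$ limits of empirical measures $\frac{1}{n}\sum_{k=0}^{n-1}\delta_{f^{k}(x_{n})}$, an invariant measure with $\lambda^{c}\leq 0$. You instead telescope the pointwise identity $\log Jf^{c}=\lambda^{c}_{A}+\phi-\phi\circ f$ directly, use boundedness of the continuous coboundary $\phi$ on the compact $\T^{3}$, and extract the uniform lower bound $\|Df^{N}|_{E^{c}}\|\geq\exp(N\lambda^{c}_{A}-2\|\phi\|_{\infty})$ without any recourse to invariant measures. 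Your argument is more elementary and self-contained; the paper's approach has the merit of isolating as an intermediate step the statement that every invariant measure has center exponent exactly $\lambda^{c}_{A}$, which is of independent interest. Note also that you make explicit the passage from the $m$-a.e.\ equation to a pointwise one via continuity and full support of $m$; the paper leaves this implicit (or relies on Wilkinson's theorem delivering a pointwise solution directly), but needs it just the same in order to integrate against arbitrary invariant measures.
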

\bp
 $$\lambda^{c}(\mu)=\lambda^{c}_{A}>0,$$ for all ergodic invariant measures $\mu$\label{anosovidad} 
 (Claim \ref{claim.wilkinson}).\par

If $f$ were not Anosov, there would be a unitary vector $v^{c}\in E^{c}$, so that for every $\eps>0$ there is a converging sequence $\{x_{n}\}_{n}$ such that $\|Df^{n}(x_{n})v^{c}\|\leq \eps^{n}$ with $n\to \infty$.
 Any accumulation point of the measures
 $$\mu_{n}=\frac{1}{n}\sum_{k=0}^{n-1}\delta_{f^{k}(x_{n})}$$
 would produce an ergodic invariant measure $\mu$ such that $\lambda^{c}(\mu)\leq 0$, a contradiction.  
 \eqref{anosovidad} implies that $f$ is Anosov.
\ep{Claim \ref{claim.f.Anosov} + Main Theorem.}
\bibliographystyle{alpha}
\bibliography{2025advbibl}
\end{document}